\def\normo#1{\left\|#1\right\|}
\def\normb#1{\big\|#1\big\|}
\def\brk#1{\left(#1\right)}
\def\norm#1{\|#1\|}
\renewcommand{\sigma}{\omega}
\newcommand{\T}{{\mathbb T}}
\newcommand{\E}{{\mathbb K}}
\newcommand{\R}{{\mathbb R}}
\newcommand{\Z}{{\mathbb Z}}
\def\norm#1{\|#1\|}
\def\normo#1{\left\|#1\right\|}
\def\normb#1{\big\|#1\big\|}
\newcommand{\e}{\varepsilon}
\newcommand{\EQ}[1]{\begin{align*}\begin{split} #1 \end{split}\end{align*}}
\newcommand{\EQn}[1]{\begin{align}\begin{split} #1 \end{split}\end{align}}
\newcommand{\Ncase}[1]{\begin{equation} \left\{\begin{aligned}#1\end{aligned}\right. \end{equation}}
\newcommand{\CAS}[1]{\begin{cases} #1 \end{cases}}
\newcommand{\Del}[1]{}
\numberwithin{equation}{section}
\newtheorem{thm}{Theorem}[section]
\newtheorem{lem}[thm]{Lemma}
\theoremstyle{remark}
\newtheorem{rem}[thm]{Remark}
\begin{document}
\subjclass[2010]{Primary 35G25, 37K10. Secondary 35Q53}
\keywords{Camassa-Holm equation, shallow water wave models, norm inflation, ill-posedness,
critical space}

\title[Camassa-Holm equation]{
Ill-posedness of the Camassa-Holm and related equations in the critical space}

\author{Zihua Guo}
\address{(Z. Guo) School of Mathematical Sciences, Monash University, Melbourne, VIC 3800, Australia}
\email{zihua.guo@monash.edu}
\thanks{}

\author{Xingxing Liu}
\address{(X. Liu)  School of Mathematics, China University of Mining and Technology, Xuzhou, 221116, China}
\email{liuxxmaths@cumt.edu.cn}
\thanks{}

\author{Luc Molinet}
\address{(L. Molinet) Institut Denis Poisson  (CNRS UMR 7013),  Universit\'{e} de Tours, Universit\'{e} d'Orl\'eans, Parc Grandmont, 37200 Tours, France}
\email{luc.molinet@univ-tours.fr}
\thanks{}

\author{Zhaoyang Yin}
\address{(Z. Yin) School of Mathematics, Sun Yat-sen University, Guangzhou, 510275, China}
\email{mcsyzy@mail.susy.edu.cn}
\thanks{}

\begin{abstract}
We prove norm inflation and hence ill-posedness for a class of shallow water wave equations, such as the Camassa-Holm equation, Degasperis-Procesi equation and Novikov equation etc., in the critical Sobolev space $H^{3/2}$ and even in the Besov space $B^{1+1/p}_{p,r}$ for $p\in [1,\infty], r\in (1,\infty]$. Our results cover both real-line and torus cases (only real-line case for Novikov), solving an open problem left in the previous works (\cite{Danchin2,Byers,HHK}).
\end{abstract}

\maketitle

%\tableofcontents

\section{Introduction}
The aim of this paper is to address the question of the well-posedness of the Cauchy problems to a class of shallow water wave equations, such as the Camassa-Holm equation, Degasperis-Procesi equation, Novikov equation and other related models, in the Sobolev space $H^{3/2}$ or Besov spaces $B_{2,r}^{3/2},$ $1<r<\infty$, and in both real-line and torus cases (only real-line case for Novikov). The methods we used in this paper are very simple and apply equally well to these  equations. Thus, we focus on Camassa-Holm equation and give the results for other equations as remarks.

Consider the Cauchy problem to the Camassa-Holm (CH) equation
\Ncase{\label{eq:CH}
u_t-u_{txx}+3uu_x &= 2u_xu_{xx}+uu_{xxx},\quad    t>0,\, x\in \E,\\
\hfill{} u(0,x)  &= u_0(x),
}
with $\E=\R$ or the torus $\E=\T=\R/2\pi \Z$, which models the unidirectional propagation of shallow water waves over a flat bottom. Here the function $u(t,x)$ represents the water’s free surface above a flat bottom. The CH equation \eqref{eq:CH} appeared initially in the context of hereditary symmetries studied by Fuchssteiner and Fokas \cite{F-Fokas} as a bi-Hamiltonian generalization of KdV equation. Later, Camassa and Holm \cite{C-H} derived it by approximating directly in the Hamiltonian for Euler's equations in the shallow water regime.

After the CH equation \eqref{eq:CH} was derived physically in the context of water waves, there are a large amount of literatures devoted to its study and which we do not attempt to exhaust. Here we only recall some results concerning the well-posedness of the Cauchy problem \eqref{eq:CH} (see also the survey \cite{Molinet}). Li and Olver \citep{Li-Olver} (see also \cite{Rodriguez-Blanco}) proved that the Cauchy problem \eqref{eq:CH} is locally well-posed with
the initial data $u_0(x)\in H^s(\R)$ with $s>3/2$ (See  \cite{C-E1} for earlier results in $H^s(\R)$, $s\geq 3$). The analogous well-posedness result on the torus was shown in \cite{Misiolek1} (See \cite{C-E2} for earlier result
in $H^3(\T)$). Danchin \cite{Danchin1} considered the local well-posedness in the Besov space, and proved well-posedness in $B^s_{p,r}$ if $1\leq p\leq \infty $,  $1\leq r< \infty$ and $s>\max\{1+1/p,3/2\}$ (For continuous dependence, see Li and Yin \cite{Li-Yin}). Note that if one wants to include the case $ r=\infty $ then one has to weaken the notion of well-posedness since the continuity of the solution with values in $B^s_{p,\infty}$ as well as the continuity of the data-to-solution map with values in $ L^\infty(0,T; B^s_{p,\infty} )$ are not known to hold.
For the endpoints, local well-posedness in the space $B^{3/2}_{2,1}$, and ill-posedness in $B^{3/2}_{2,\infty}$ (the data-to-solution map $u_0\mapsto u$ is not continuous using peakon solution. Note however that, as mentioned above, the continuity of this map is not known for any $s\in\R $) were established in \cite{Danchin2}.
Moreover, as mentioned in \cite{Danchin2}, there is no definitive answer to the intermediate cases $B^{3/2}_{2,r},1<r<\infty$.  The critical Sobolev space for well-posedness is $H^{3/2}$ since CH is ill-posed  in $H^s$ for $s<3/2$ in the sense of norm inflation (Byers \cite{Byers}).  For other equations there are similar results, but we do not list here.  To the best of our knowledge, well-posedness in $H^{3/2}$ is still an open problem.

In this paper we solve this problem by proving
the norm inflation and hence the ill-posedness of the CH equation \eqref{eq:CH} in $H^{3/2}$ and  in $B^{3/2}_{2,r},\ 1<r<\infty$.

\begin{thm}\label{thm1}
Let $\E=\R\mbox{ or }\T$, $1\leq p\leq \infty$ and $1<r\leq \infty$. $\forall\ \e>0$, there exists $u_0 \in  H^\infty(\E)$, real-valued, such that the following hold:

(1) $\norm{u_0}_{B^{1+1/p}_{p,r}(\E)}\leq \e$;

(2) There is a unique solution $u\in C([0,T); H^\infty(\E))$ to the Cauchy problem of \eqref{eq:CH} with a maximal lifespan $T<\e$;

(3) $\limsup_{t\to T^-}\norm{u(t)}_{B^{1+1/p}_{p,r}(\E)}\geq \limsup_{t\to T^-}\norm{u(t)}_{B^{1}_{\infty,\infty}(\E)}=\infty$.
\end{thm}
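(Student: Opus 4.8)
The plan is to construct the initial datum $u_0$ explicitly as a single, highly-concentrated high-frequency bump (or a suitable finite superposition of such bumps at different scales, depending on which norm we are chasing), and to exploit the fact that the CH equation, rewritten in nonlocal form
\[
u_t + uu_x = -\p_x(1-\p_x^2)^{-1}\Bigl(u^2+\tfrac12 u_x^2\Bigr),
\]
has a transport structure whose worst term is precisely $uu_x$. Concretely, for a large parameter $N$ I would take, in the real-line case, $u_0(x) = \alpha_N N^{-1-1/p}\,\phi(x)\cos(Nx)$ with $\phi$ a fixed Schwartz cut-off and $\alpha_N\to 0$ slowly (e.g.\ a negative power of $\log N$, or of $\log\log N$), so that part (1), $\norm{u_0}_{B^{1+1/p}_{p,r}}\lesssim\alpha_N$, holds by a direct Littlewood–Paley computation: the datum lives essentially at frequency $N$, so its $B^{1+1/p}_{p,r}$ norm is comparable to $N^{1+1/p}\cdot(\text{amplitude})\cdot(\text{$L^p$ size of }\phi)$. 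Since $u_0\in H^\infty$, the local well-posedness theory in, say, $H^s$ for large $s$ (recalled in the introduction) gives the unique smooth solution $u\in C([0,T^\ast);H^\infty)$ on a maximal interval, which yields the existence and uniqueness in part (2); that $T^\ast$ can be forced below $\e$ will come out of the same mechanism that proves (3), by a continuation/blow-up criterion (the solution persists as long as $\norm{u(t)}_{B^1_{\infty,\infty}}$, equivalently $\norm{u_x(t)}_{L^\infty}$, stays finite).

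For the heart of the argument — the norm inflation (3) — I would run a short-time expansion of the solution around the free transport flow. Write $u = u^{(0)} + v$ where $u^{(0)}$ solves the linear transport-type equation (or just track $u$ along characteristics). The key heuristic is that the term $uu_x$ transfers the bump to frequency $\sim 2N$ with an amplitude gain: after time $t$, the Duhamel term $\int_0^t (uu_x)(s)\,ds$ has size $\sim t\cdot(\text{amplitude})^2\cdot N$ at frequency $\sim N$, and more importantly the self-interaction pushes energy to higher frequencies with a multiplicative factor. Iterating (or solving the ODE for the amplitude of the piece at frequency $\sim 2^k N$), one finds that on a time scale $t_N \sim 1/(\alpha_N^2\cdot(\text{freq-dependent factor}))$ — which is $\ll \e$ for $N$ large — the $B^1_{\infty,\infty}$ norm, i.e.\ $\norm{u_x}_{L^\infty}$, becomes arbitrarily large; since the Besov embedding gives $\norm{u(t)}_{B^{1+1/p}_{p,r}}\gtrsim\norm{u(t)}_{B^1_{\infty,\infty}}$ up to fixed constants (this is where $r>1$ and the scaling $1+1/p$ of the index are used — $B^{1+1/p}_{p,r}\hookrightarrow B^1_{\infty,1}\hookrightarrow B^1_{\infty,\infty}$ fails only at $r=1$ in the first embedding, which is exactly why $r=1$ is excluded), part (3) follows. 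On the torus the construction is the same with $\cos(Nx)$ already periodic; the nonlocal operator $(1-\p_x^2)^{-1}$ is handled identically by Fourier series.

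The main technical obstacle — and where the real work lies — is making the short-time expansion rigorous: one must show that the "good" remainder $v$ stays small in a norm strong enough to control it (e.g.\ $H^\sigma$ for some $\sigma$ between $1$ and $3/2$, or a Besov space adapted to the construction) on the time interval $[0,t_N]$, so that the leading-order growth computed above is not cancelled or swamped by error terms. This is delicate because $H^{3/2}$ is the critical space: the nonlinear estimate for $\p_x(1-\p_x^2)^{-1}(u^2+u_x^2/2)$ just barely closes, and the logarithmic factor $\alpha_N$ is precisely what buys the small slack needed to absorb a $\log N$ loss coming from the endpoint product estimate / the failure of $L^\infty$ to contain the critical Besov space. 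I would organize this as: (i) an \emph{a priori} energy estimate showing $u$ exists and obeys $\norm{u(t)}_{H^s}\lesssim\norm{u_0}_{H^s}$ on $[0,t_N]$ for the relevant $s$; (ii) a lower bound, via the transport equation along characteristics and the frequency-cascade mechanism, for a single carefully chosen frequency-localized piece $\Delta_j u(t)$ with $2^j\sim N$ (or $\sim 2N$); (iii) combining (i)–(ii) to get the blow-up of $\norm{u(t)}_{B^1_{\infty,\infty}}$ and hence, by the continuation criterion, that the maximal time is $<\e$. Steps (ii) is the crux; I expect the cleanest route is not a full Picard iteration but rather a direct ODE comparison for the amplitude of the relevant wave packet, using that the characteristics of $u$ barely move (displacement $\lesssim t_N\norm{u}_{L^\infty}\ll N^{-1}$) on the time scale in question, so the oscillatory structure $\cos(Nx)$ is preserved long enough for the quadratic self-interaction to do its work.
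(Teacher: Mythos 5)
Your proposal takes a genuinely different route from the paper, and it has two concrete gaps that I do not think can be repaired as stated. First, the single high-frequency bump $u_0=\alpha_N N^{-1-1/p}\phi(x)\cos(Nx)$ cannot drive inflation on a time scale $\ll\e$. For a function concentrated at a single dyadic scale, all the Besov norms $B^{1+1/p}_{p,r}$ are comparable regardless of $r$, and in particular $\norm{\p_x u_0}_{L^\infty}\lesssim \norm{u_0}_{B^1_{\infty,1}}\lesssim \norm{u_0}_{B^{1+1/p}_{p,r}}\lesssim\alpha_N$ is \emph{small}. But the only known singularity mechanism for CH is wave breaking, governed by the Riccati-type dynamics of $\inf_x u_x$ along characteristics; with an initially small slope this takes time $\gtrsim 1/\alpha_N$, not $t_N\ll\e$. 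There is no high-to-high frequency cascade that amplifies amplitudes for CH the way your heuristic suggests (the Duhamel term you compute is quadratically small and stays so). What is actually needed — and what the paper does — is a \emph{lacunary multi-scale} superposition $h=\sum_k 2^{-2k}k^{-2/(1+r)}h_k$ of odd bumps: each dyadic block contributes a bounded negative amount $\sim -k^{-2/(1+r)}$ to $h'(0)$, so $u_0'(0)$ can be made arbitrarily large negative while $\norm{u_0}_{B^{1+1/p}_{p,r}}\sim\norm{k^{-2/(1+r)}}_{\ell^r}$ stays small. This is the true role of $r>1$ (divergence of $\sum k^{-2/(1+r)}$ versus convergence of its $\ell^r$ norm), not the embedding $B^{1+1/p}_{p,r}\hookrightarrow B^1_{\infty,\infty}$, which in fact holds for every $r\in[1,\infty]$ by Bernstein. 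The short lifespan $T<\e$ then comes for free from the classical odd-data blow-up criterion ($T\le 2/|u_0'(0)|$, Lemma \ref{lem3}), with no short-time expansion needed.

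Second, even granting that $\norm{u_x(t)}_{L^\infty}\to\infty$ as $t\to T^-$, your identification of $\norm{u}_{B^1_{\infty,\infty}}$ with $\norm{u_x}_{L^\infty}$ is false in the direction you need: $B^1_{\infty,\infty}$ is strictly weaker than $\mathrm{Lip}$, so blow-up of the Lipschitz norm does not imply blow-up of the $B^1_{\infty,\infty}$ norm. This is precisely the delicate point of part (3). The paper closes it by contradiction: if $\sup_{t<T}\norm{u(t)}_{B^1_{\infty,\infty}}\le M$, then the logarithmic interpolation inequality $\norm{u_x}_{L^\infty}\le C\norm{u}_{B^1_{\infty,\infty}}\log(2+\norm{u}_{H^2}^2)+C$ combined with the energy estimate $\frac{d}{dt}\norm{u}_{H^2}^2\le C\norm{u_x}_{L^\infty}\norm{u}_{H^2}^2$ and a log-Gronwall lemma forces $\norm{u(t)}_{H^2}$ to stay bounded up to $T$, contradicting the $H^2$ blow-up criterion. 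Your proposal contains no substitute for this step. In summary: replace the single bump by the lacunary odd sum, use the symmetry blow-up criterion instead of a frequency-cascade expansion, and add the log-interpolation/Gronwall contradiction to upgrade the blow-up to the weak norm $B^1_{\infty,\infty}$.
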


\begin{rem}\label{remmain}
We would like to give a few remarks.

(1) Taking $p=r=2$, we see that our theorem implies the ill-posedness in the critical Sobolev space $H^{3/2}$ (non-continuity of the flow map at 0). The norm inflation is even stronger than the classical sense, namely weak norm $\norm{u(t)}_{B^{1}_{\infty,\infty}(\E)}$ blows up.

Recently, for Euler equation, Bourgain and Li in \cite{BL, BL1} proved strong local ill-posedness in borderline Besov spaces $B^{d/p+1}_{p,r}$ for $1\leq p<\infty$ and $1<r\leq\infty$ when $d=2,3$.  The hyperbolic structure of CH and other equations is similar to Euler equation.  Compared to their proof, our proof of Theorem \ref{thm1} is much simpler since for CH we have simple blow-up result using symmetry. Our methods are inspired by the works of Saut on Burgers equation (see \cite{Saut, Saut2}).

(2) Theorem \ref{thm1} also holds for the following related equations:
\begin{itemize}
\item Degasperis-Procesi (DP) equation
\begin{align}\label{eq:DP}
u_t-u_{txx}+4uu_x=3u_xu_{xx}+uu_{xxx}, \quad t>0, \, x\in \E.
\end{align}

\item General $b$-family of equations with $1<b\leq 3$:
\begin{align}\label{eq:b-family}
u_t-u_{txx}+(b+1)uu_x=bu_xu_{xx}+uu_{xxx}, \quad t>0,\, x\in \E.
\end{align}
Note that CH corresponds to $b=2$ and DP corresponds to $b=3$.
\end{itemize}

(3) Theorem \ref{thm1} also holds for the Novikov equation on the real-line (See Remark \ref{remlast}):
\EQn{\label{eq:Novikov}
u_t-u_{txx}+4u^2u_x=3uu_xu_{xx}+u^2u_{xxx}, \quad t>0, x\in \R.
}

(4) Theorem \ref{thm1} also holds for the two-component CH, DP, b-family equations. Namely, we have norm inflation in $B^{1+1/p}_{p,r}\times B^{1/p}_{p,r}$ for $1<r\leq \infty$, $1\leq p\leq \infty$.
\end{rem}

\section{Preliminaries}

\subsection{Notations}
Throughout this paper, we use $C$ to denote universal contant which may vary from line to line. For $1\leq r\leq \infty$, we denote its conjugate number $r'=\frac{r}{r-1}$. For function $f$ defined on $\R$ or $\T$, we define its Fourier transform denoted by $\hat{f}(\xi)$ as
\EQ{
\hat{f}(\xi)=\int_\R f(x)e^{-2\pi ix\xi}dx, \quad \xi\in \R; \quad
 \hat{f}(\xi)=\frac{1}{2\pi}\int_\T f(x)e^{-i\xi x}dx, \quad \xi \in  \Z.
}
For convenience, we use $\E^*$ to denote $\R$ or $\Z$, endowed with their natural measure $\mu$. $H^s(\E)$ denotes the usual Sobolev space  consisting of tempered distributions $f\in \mathcal{S'}(\E)$ such that $\hat{f}\in L^2_{loc}(\E)$ and
\[\|u\|_{H^s(\E)}:=\big[\int_{\E^*}(1+|\xi|^2)^s|\hat{f}(\xi)|^2d\mu(\xi)\big]^{1/2}<\infty.\]

Let $\eta: \R\to [0, 1]$ be an even, smooth, non-negative and radially decreasing
function which is supported in $\{\xi:|\xi|\leq \frac{8}{5}\}$ and
$\eta\equiv 1$ for $|\xi|\leq \frac{5}{4}$. For $k\in \Z$, let
$\chi_k(\xi)=\eta(\frac{\xi}{2^k})-\eta(\frac{\xi}{2^{k-1}})$ and $\chi_{\leq
k}(\xi)=\eta(\frac{\xi}{2^k})$, and define Littlewood-Paley operators $P_k, P_{\leq k}$ on $L^2(\E)$ by
$\widehat{P_ku}(\xi)=\chi_k(|\xi|)\widehat{f}(\xi),\,\widehat{P_{\leq
k}u}(\xi)=\chi_{\leq k}(|\xi|)\widehat{f}(\xi)$. We can then define the Besov space $B^s_{p,r}$ with the norm $\norm{f}_{B^s_{p,r}}=\normb{2^{ks}\norm{P_k f}_{L^p(\E)}}_{l_k^r}$.

\subsection{Useful Lemmas}
In this subsection, we collect some results that we need.
The first one is the well-posedness result (see \cite{Li-Olver, Rodriguez-Blanco}).  For other equations in Remark \ref{remmain}, we have the same results.

\begin{lem}[\cite{Li-Olver, Rodriguez-Blanco}]\label{lem1}
Given $u_0(x)\in H^s$, $s>3/2$, there exists a maximal $T\ge \tilde{T}(\norm{u_0}_{H^{\frac{3}{2}+}})>0$ and a unique solution $u$
to \eqref{eq:CH} such that
\[u\in C([0,T);H^s)\cap C^1([0,T);H^{s-1}).\]
Moreover, the solution depends continuously on the initial data, i.e. for any $T'<T $
the mapping $u_0\mapsto u: H^s\mapsto  C([0,T'];H^s)\cap C^1([0,T'];H^{s-1})$ is continuous  on a $ H^s $-neighborhood
 of $ u_0 $, and if $T<\infty$, then $\lim_{t\to T^-}\norm{u(t)}_{H^s}=\infty$.
\end{lem}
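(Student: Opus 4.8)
The plan is to go through the nonlocal transport formulation of \eqref{eq:CH}. Applying $(1-\partial_x^2)^{-1}$ rewrites the equation as
\[
 u_t + u u_x = -\partial_x\, G * \big(u^2 + \tfrac12 u_x^2\big) =: F(u),
\]
with $G$ the fundamental solution of $1-\partial_x^2$ ($G(x)=\tfrac12 e^{-|x|}$ on $\R$, the periodic analogue on $\T$). Since convolution with $\partial_x G$ gains one derivative on the Fourier side and $H^{s-1}$ is an algebra for $s>3/2$, one gets $F\colon H^s\to H^s$ with $\norm{F(u)}_{H^s}\les\norm{u}_{H^s}^2$ and $F$ Lipschitz on bounded subsets of $H^s$; this is the structural fact that drives everything. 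I describe the argument for $\E=\R$, the case $\E=\T$ being identical.

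\emph{Existence, uniqueness and the blow-up alternative.} First I would run energy estimates on the Friedrichs-mollified problem. Applying $\Lambda^s=(1-\partial_x^2)^{s/2}$, pairing with $\Lambda^s u$ in $L^2$, and using the Kato--Ponce commutator bound $\norm{[\Lambda^s,u]\partial_x v}_{L^2}\les\norm{\partial_x u}_{L^\infty}\norm{v}_{H^s}+\norm{u}_{H^s}\norm{\partial_x v}_{L^\infty}$ gives
\[
 \frac{d}{dt}\norm{u(t)}_{H^s}\;\les\;\big(1+\norm{\partial_x u(t)}_{L^\infty}\big)\,\norm{u(t)}_{H^s}.
\]
For $s>3/2$ one has $\norm{\partial_x u}_{L^\infty}\les\norm{u}_{H^{3/2+}}\les\norm{u}_{H^s}$, so this is a Riccati-type inequality. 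Taking $s=\tfrac32+\delta$ first makes the right-hand side controlled by $\norm{u}_{H^{3/2+\delta}}$ alone, hence a uniform time $\tilde T=\tilde T(\norm{u_0}_{H^{3/2+\delta}})$; for general $s$ one keeps $\norm{\partial_x u}_{L^\infty}\les\norm{u}_{H^{3/2+\delta}}$ in the same inequality and propagates $\norm{u}_{H^s}$ on $[0,\tilde T]$ by Gr\"onwall. Sending the mollification parameter to $0$ (uniform bounds together with Cauchyness of the approximants in $C([0,\tilde T];H^{s-1})$, or Aubin--Lions compactness) yields $u\in C([0,T);H^s)$, and reading $u_t$ off the equation gives $u\in C^1([0,T);H^{s-1})$. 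Uniqueness comes from an $L^2$ estimate on the difference of two solutions and Gr\"onwall. For the blow-up alternative, the Gr\"onwall form $\norm{u(t)}_{H^s}\le\norm{u_0}_{H^s}\exp(C\int_0^t(1+\norm{\partial_x u}_{L^\infty})\,ds)$ shows that if $T<\infty$ but $\norm{u(t)}_{H^s}$ stayed bounded then $\int_0^T\norm{\partial_x u}_{L^\infty}\,ds<\infty$, and the local theory would extend the solution past $T$, a contradiction; hence $\limsup_{t\to T^-}\norm{u(t)}_{H^s}=\infty$.

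\emph{Continuous dependence (the main obstacle).} The delicate part is continuity of the data-to-solution map into $C([0,T'];H^s)$ with the full index $s$: a difference estimate only controls $u-v$ in $H^{s-1}$ (the transport term costs a derivative), which by interpolation gives continuity into $C([0,T'];H^{s'})$ for $s'<s$ but not the endpoint. I would close this by a Bona--Smith-type argument: approximate $u_0$ by frequency-truncated data $J_n u_0$ (trading regularity for smallness, $\norm{(1-J_n)u_0}_{H^{s-1}}\to0$ while $\norm{J_n u_0}_{H^{s+1}}$ is only mildly large), solve with these regular data, and combine the $H^{s-1}$ convergence of the resulting solutions with the equicontinuity of $t\mapsto\norm{u_n(t)}_{H^s}$ coming from the \emph{exact} top-order energy identity behind the estimate above, upgrading to convergence in $C([0,T'];H^s)$ uniformly for data near $u_0$ (this is the approach of \cite{Li-Yin}). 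Controlling how the $H^s$ energy of the regularized family varies, uniformly in the data, is exactly where the work sits and is what makes continuous dependence genuinely harder than existence and uniqueness for this quasilinear, non-dispersive equation.
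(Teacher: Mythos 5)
The paper does not actually prove this lemma---it imports it from \cite{Li-Olver, Rodriguez-Blanco}---but the sketch it does give (rewriting \eqref{eq:CH} in the nonlocal transport form \eqref{eq:CH1} and running energy estimates whose closure requires $H^s\hookrightarrow Lip$, i.e. $s>3/2$) is exactly the route you take, and the details you supply (Kato--Ponce commutator, mollification, a Bona--Smith argument for continuous dependence as in \cite{Li-Yin}) are the standard and correct way to fill it in. The one point to tighten is the blow-up alternative: you conclude $\limsup_{t\to T^-}\norm{u(t)}_{H^s}=\infty$ while the lemma asserts the full limit, but this upgrade is immediate from the uniform lower bound $\tilde{T}(\norm{u(\tau)}_{H^{3/2+}})$ on the lifespan of the solution restarted at any $\tau<T$, which forces $\norm{u(t)}_{H^{3/2+}}$ (hence $\norm{u(t)}_{H^s}$) to exceed any given $M$ for all $t$ sufficiently close to $T$.
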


The proof of the well-posedness relies on rewriting the equation into a perturbation of Burgers equation. For example, the general $b$-family equations are equivalent to
\begin{align}\label{eq:CH1}
u_t+uu_x=-\partial_x\Lambda^{-2}(\frac{b}{2}u^2+\frac{3-b}{2}u_x^2)=-\partial_xp*(\frac{b}{2}u^2+\frac{3-b}{2}u_x^2);
\end{align}
where $\Lambda=(1-\partial_x^2)^{1/2}$ and
\EQ{
p(x)=\CAS{e^{-|x|}/2, &\quad x\in \R; \\\dfrac{\cosh(x-[x]-1/2)}{2\sinh(1/2)}, &\quad x\in \T.}
}
Thus, semilinear well-posedness are not expected, indeed, non uniform continuity of the flow-map for CH in $H^s$ for large $s$ was shown in \cite{Himonas-K}. The CH equation is integrable and has infinite many conservation laws, in particular, the energy conservation
\EQ{
E(u)=\int u^2+u_x^2dx.
}
So $H^1$ is the most natural setting for the physically relevant weak solutions. Global weak solution in $H^1$ was eastablished in \cite{Xin-Zhang}.  In the $H^1$ setting one has to associate a measure to keep track of possible singularities and of whether one imposes conservation of energy or permits dissipation (see \cite{C-Molinet, B-C1,B-C2}).
On the other hand, one can use the classical energy methods to \eqref{eq:CH1} and prove well-posedness in $H^s$ for $s>3/2$ in the sense of Hadamard.  The restriction $s>3/2$ comes from the energy estimates: let $u$ be a smooth solution to \eqref{eq:CH1}, then
\EQn{\label{eq:energy estimates}
\frac{d}{dt}\norm{u}_{H^s}^2\leq C\norm{u_x}_{L^\infty} \norm{u}_{H^s}^2.
}
We need $s>3/2$ to ensure the embedding $H^s(\R)\hookrightarrow Lip$ with $s>3/2$ ($Lip$ here denotes the bounded Lipschitz functions). For $s=3/2$, we could replace $H^s$ by $B^{3/2}_{2,1}$. The energy estimates \eqref{eq:energy estimates} hold for all the equations in Remark \ref{remmain} except Novikov equation \eqref{eq:Novikov}.
For $s=2$, we can derive it by just differentiating the equation and integrating by part.  For $3/2<s<2$, we need the commutator estimates.  For Novikov equation, we have the energy estimates:  let $u$ be a smooth solution to \eqref{eq:Novikov}, then
\EQn{\label{eq: Nov energy}
\frac{d}{dt}\norm{u}_{H^2}^2\leq C\norm{u}_{L^\infty}\norm{u_x}_{L^\infty} \norm{u}_{H^2}^2.
}
Indeed, let $y=u-\partial_x^2u$.  Then the Novikov equation is equivalent to
\begin{align*}
y_t+u^2y_x+3yu_xu=0.
\end{align*}
Multiplying $y$ on both sides of the above equation and then integrating by parts we get \eqref{eq: Nov energy}.

A crucial difference between CH and KdV is that for CH equation smooth data can develop singularity in finite time. We have the following classical blow-up result (see \cite{C-E1}). For other equations in Remark \ref{remmain} except Novikov equation we have the similar blow-up results. The blow-up of smooth solutions for CH
can only occur in the form of wave breaking: the solution remains bounded but its slope becomes unbounded (see \cite{C-E3, Con, C-E4}).

\begin{lem}[\cite{C-E1}]\label{lem3}
Assume  $u_0(x)\in H^3(\E)$ is a real-valued odd function and $u'_0(0)<0$.
Then the corresponding strong solution to \eqref{eq:CH} blows up in finite time. Moreover,
the maximal time of existence is bounded by $2/|u'_0(0)|$.
\end{lem}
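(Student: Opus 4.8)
The plan is to exploit the odd symmetry of the solution, which is preserved by the flow, together with the transport structure of the equation written in the form \eqref{eq:CH1}. First I would note that if $u_0$ is odd then, by uniqueness in Lemma~\ref{lem1}, the solution $u(t,\cdot)$ remains odd for all $t$ in its lifespan; consequently $u(t,0)=0$ and $u_{xx}(t,0)=0$ for all $t$, while $p*(\cdot)$ of an even function is even. The key quantity to track is $m(t):=u_x(t,0)$. Differentiating \eqref{eq:CH1} once in $x$ and evaluating at $x=0$, the transport term $u u_x$ contributes $u u_{xx}+u_x^2$, which at $x=0$ reduces to $m(t)^2$ (using $u(t,0)=0$), so that
\begin{align*}
\frac{d}{dt}m(t) = -m(t)^2 - \partial_x^2\big[p*(\tfrac{b}{2}u^2+\tfrac{3-b}{2}u_x^2)\big](t,0).
\end{align*}

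The second step is to control the forcing term at the origin. Using $\partial_x^2 p * g = p*g - g$ (which follows from $(1-\partial_x^2)p=\delta$, valid on both $\R$ and $\T$), the forcing becomes $-\big(p*w - w\big)(t,0)$ with $w=\tfrac{b}{2}u^2+\tfrac{3-b}{2}u_x^2\ge 0$ for $1<b\le 3$ (actually for $b\le 3$; for CH, $b=2$, both coefficients are $1$). Since $w\ge 0$ and $p\ge 0$ we have $(p*w)(t,0)\ge 0$, while $w(t,0)=\tfrac{3-b}{2}m(t)^2\ge 0$. Plugging in and keeping only what helps, one obtains a differential inequality of the form
\begin{align*}
\frac{d}{dt}m(t) \le -m(t)^2 + w(t,0) = -m(t)^2 + \tfrac{3-b}{2}m(t)^2 = -\tfrac{b-1}{2}\,m(t)^2 \le -\tfrac{1}{2}\,m(t)^2
\end{align*}
for $b=2$, and more generally $\frac{d}{dt}m \le -\frac{b-1}{2}m^2$ for $1<b\le 3$; dropping the nonnegative $(p*w)(t,0)$ term is exactly what makes the estimate clean. (For the CH case one can also simply use that the $p*w$ term is nonnegative and $w(t,0)=m^2$ cancel-free, giving $m' \le -\frac12 m^2$.) With $m(0)=u_0'(0)<0$, a Riccati comparison argument shows $m(t)\to -\infty$ in finite time $T_*\le 2/|u_0'(0)|$.

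The final step is to transfer blow-up of $m(t)=u_x(t,0)$ to blow-up of $\|u(t)\|_{H^3}$: if the $H^3$ norm stayed bounded up to some $T\ge T_*$, then by the embedding $H^3\hookrightarrow C^1$ the quantity $u_x(t,0)$ would stay bounded, contradicting $m(t)\to-\infty$; hence the maximal existence time is at most $T_*\le 2/|u_0'(0)|$, and by the blow-up alternative in Lemma~\ref{lem1} the $H^3$ norm blows up there. I expect the only delicate point to be the sign bookkeeping in the forcing term --- verifying $(1-\partial_x^2)p=\delta$ on the torus and that the coefficient $\frac{3-b}{2}$ has the right sign so that the troublesome term can be discarded rather than needing a bound; everything else is the standard Riccati argument. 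Since the statement as given is for CH ($b=2$) with $u_0\in H^3$, these sign issues are in fact immediate, and the proof reduces to the symmetry observation plus the scalar ODE comparison.
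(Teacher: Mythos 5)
Your proposal is correct and follows essentially the same route as the paper: preservation of oddness under the flow, the identity $\partial_x^2 p*w=p*w-w$ to reduce the equation for $m(t)=u_x(t,0)$ to $m'+\tfrac{b-1}{2}m^2=-(p*w)(0)\le 0$, and the Riccati comparison giving $T\le 2/|u_0'(0)|$. The paper simply states the resulting differential identity directly, while you spell out the sign bookkeeping for the forcing term; the content is identical.
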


The proof of Lemma \ref{lem3} is quite simple based on the observation of preservation of anti-symmetry $u(t,x)\to -u(t,-x)$ under the flow of the CH equation \eqref{eq:CH}. We sketch it here.  For $u_0(x)\in H^3$ odd, then the solution of CH equation satisfies $u(t,x)=-u(t,-x)$ and thus $u_{xx}(t,0)=0$.  Setting $g(t):=u_x(t,0)$ for $t\in [0,T)$, we deduce from \eqref{eq:CH1} that
\begin{align*}
\dfrac{d}{dt}g(t)+\dfrac{1}{2}g^2(t)=-\left(p*(u^2+\dfrac{u_x^2}{2})\right) (0)\leq0.
\end{align*}
Thus, we have
\begin{align*}
\dfrac{d}{dt}g(t)\leq -\dfrac{1}{2}g^2(t), \quad \ t\in[0,T).
\end{align*}
Since $g(0)<0$, then $g(t)<0$ and
\begin{align*}
0>\dfrac{1}{g(t)}\geq\dfrac{1}{g_0}+\dfrac{t}{2}, \quad \ t\in[0,T),
\end{align*}
which implies that $T<-2/u'_0(0)$.

For general $b$-family equations, the above arguments also work for $1<b\leq 3$. Indeed, similarly we get
\begin{align*}
\dfrac{d}{dt}g(t)+\dfrac{b-1}{2}g^2(t)=-\left(p*(\frac{b}{2}u^2+\frac{3-b}{2}u_x^2)\right) (0)\leq0.
\end{align*}
Then we get the maximal time of existence $T<-\frac{2}{(b-1)u'_0(0)}$.
For the Novikov equation on the real-line, the above argument does not work but there are similar blow-up results (e.g. see \cite{YanLZ}). However, no explicit initial data was constructed in \cite{YanLZ}. To apply their result, we construct an explicit example (see Remark \ref{remlast}). We are not aware of any blow-up results for the periodic Novikov equation.
\begin{lem}[\cite{YanLZ}]\label{lem4}
Assume  $u_0(x)\in H^3(\R)$ is a real-valued function and let $y_0=(1-\partial_x^2)u_0$. If $y_0(0)=0$, $y_0(x)\geq 0$ for $x\leq 0$ and $y_0(x)\leq 0$ for $x\geq 0$, and
\EQ{
u_0(0)u_0'(0)<-\frac{1}{2}\norm{u_0}_{H^1}^2.
}
Then the corresponding strong solution to \eqref{eq:Novikov} blows up in finite time. Moreover,
the maximal time of existence $T$ is bounded as follows
\[T\leq \min\left\{-\frac{2}{(1-\delta)m(0)}, \frac{2}{\norm{u_0}_{H^1}^2}\ln\frac{m(0)-\frac{1}{2}\norm{u_0}_{H^1}^2}{m(0)+\frac{1}{2}\norm{u_0}_{H^1}^2}\right\}\]
where $-\sqrt{\delta}m(0)=\frac{1}{2}\norm{u_0}_{H^1}^2$, $m(0)=u_0(0)u_0'(0)$.
\end{lem}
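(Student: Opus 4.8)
The plan is to reconstruct the wave-breaking mechanism of \cite{YanLZ} by following the product $u\,u_x$ along the characteristic emanating from the origin and converting the sign hypotheses on $y_0$ into a Riccati differential inequality. As recalled in the excerpt, with $y=(1-\partial_x^2)u$ the Novikov equation \eqref{eq:Novikov} is equivalent to the transport equation $y_t+u^2y_x+3uu_xy=0$. First I would introduce the flow $q(t,\cdot)$ defined by $\partial_t q(t,x)=u^2(t,q(t,x))$, $q(0,x)=x$; since the local theory gives $u\in C([0,T);H^3(\R))$, which is spatially $C^2$, the map $q(t,\cdot)$ is an increasing diffeomorphism of $\R$, and along it $y$ obeys the linear ODE $\frac{d}{dt}[y(t,q(t,x))]=-3(uu_x)(t,q(t,x))\,y(t,q(t,x))$. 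Hence $y(t,q(t,x))=y_0(x)\exp(-3\int_0^t(uu_x)(s,q(s,x))\,ds)$, so the \emph{sign} of $y$ is transported. Writing $\xi(t):=q(t,0)$, the hypotheses $y_0(0)=0$, $y_0\ge0$ on $(-\infty,0]$ and $y_0\le0$ on $[0,\infty)$ then propagate to $y(t,\xi(t))=0$, with $y(t,\cdot)\ge0$ on $(-\infty,\xi(t))$ and $y(t,\cdot)\le0$ on $(\xi(t),\infty)$ for every $t\in[0,T)$.

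Next I would exploit the representations $u=p*y$ and $u_x=p'*y$, where $p(x)=\tfrac12e^{-|x|}$. Splitting the convolutions at $z=\xi(t)$ and using the sign structure, I set $A:=\tfrac12\int_{-\infty}^{\xi}e^{z-\xi}y\,dz\ge0$ and $B:=\tfrac12\int_{\xi}^{\infty}e^{\xi-z}y\,dz\le0$, so that $U:=u(t,\xi)=A+B$ and $V:=u_x(t,\xi)=B-A$. This produces the two structural facts $U^2\le V^2$ (from $U^2-V^2=4AB\le0$) and $m(t):=U(t)V(t)=B^2-A^2$, with $m(0)=u_0(0)u_0'(0)<0$ by hypothesis. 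I would also record that $H:=\norm{u}_{H^1}^2$ is conserved for \eqref{eq:Novikov}: indeed $\frac{d}{dt}\int uy\,dx=0$ using the $y$-equation and integration by parts, and $\int uy\,dx=\norm{u}_{H^1}^2$. This conservation yields the pointwise bound $U^2=u(t,\xi)^2\le\tfrac12 H=\tfrac12\norm{u_0}_{H^1}^2$, so that $U$ stays bounded while $V$ is free to blow up.

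The heart of the argument is a differential inequality for $m$. Rewriting \eqref{eq:Novikov} in nonlocal form
\[
u_t+u^2u_x=-\partial_x p*\Big(u^3+\tfrac32 uu_x^2\Big)-\tfrac12\,p*(u_x^3),
\]
and differentiating once in $x$ (using $\partial_x^2 p*g=p*g-g$), I would compute the material derivative of $m=uu_x$ along $\xi(t)$, finding
\[
m'(t)=-\tfrac12 m^2+U^4-V\,[p'*g](\xi)-U\,[p*g](\xi)-\tfrac12 V\,[p*(u_x^3)](\xi)-\tfrac12 U\,[p'*(u_x^3)](\xi),
\]
where $g=u^3+\tfrac32 uu_x^2$ and the damping $-\tfrac12 m^2$ comes from the local term $-\tfrac12U^2V^2$. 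The step I expect to be the main obstacle is to dominate the remaining local and nonlocal terms by $\tfrac18 H^2$, so as to reach the Riccati inequality
\[
m'(t)\le -\tfrac12 m(t)^2+\tfrac18\norm{u_0}_{H^1}^4 .
\]
The subtle point is that the cubic terms $p*(u_x^3)$ and $p'*(u_x^3)$ are \emph{not} controllable by $H$ through naive Young-type estimates (they formally involve $\norm{u_x}_{L^\infty}$, which is exactly what diverges). They must instead be re-expressed through the one-sided exponential integrals defining $A$ and $B$, so that, together with the sign structure $A\ge0\ge B$, they combine with $U^4$ and cancel down to quantities bounded by the conserved $H$; securing these cancellations with the sharp constant $\tfrac18$ is the technical core, and is where I would spend most effort.

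Finally I would integrate the Riccati inequality. At $t=0$ the hypothesis $m(0)<-\tfrac12 H$ gives $m'(0)\le-\tfrac12 m(0)^2+\tfrac18H^2<0$, so $m$ is strictly decreasing and remains below $m(0)$; on this regime $m(t)^2\ge m(0)^2$, whence $-\tfrac12 m^2+\tfrac18H^2\le-\tfrac{1-\delta}{2}m^2$ with $\delta=\tfrac{H^2/4}{m(0)^2}\in(0,1)$, i.e. $-\sqrt\delta\,m(0)=\tfrac12\norm{u_0}_{H^1}^2$. Integrating $(1/m)'\ge\tfrac{1-\delta}{2}$ forces $m\to-\infty$ by time $-\tfrac{2}{(1-\delta)m(0)}$; separately, comparing with the full Riccati ODE $m'=-\tfrac12m^2+\tfrac18H^2$ and integrating by partial fractions gives blow-up by $\tfrac{2}{H}\ln\tfrac{m(0)-H/2}{m(0)+H/2}$. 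Taking the smaller of the two yields the stated bound on $T$. Since $U$ stays bounded by $(H/2)^{1/2}$ while $m=UV\to-\infty$, we get $V=u_x(t,\xi(t))\to-\infty$, hence $\norm{u_x(t)}_{L^\infty}\to\infty$; by the energy estimate \eqref{eq: Nov energy} (with $\norm{u}_{L^\infty}$ bounded by conservation of $H$) the solution cannot be continued past this time, which establishes finite-time blow-up and the claimed upper bound for the maximal existence time.
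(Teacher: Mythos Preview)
The paper does not give its own proof of this lemma: it is quoted verbatim from \cite{YanLZ} and used as a black box, so there is no argument in the paper to compare your proposal against. Your write-up is a faithful outline of the Yan--Li--Zhang mechanism (sign propagation of $y$ along characteristics, the $A$/$B$ decomposition of $u=p*y$ at the tracked point, $H^1$-conservation, and a Riccati inequality for $m=uu_x$), and the extraction of the two explicit upper bounds on $T$ from the Riccati inequality is correct.

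The one place your sketch is genuinely incomplete is exactly where you flag it: passing from the raw expression for $m'(t)$ to the clean inequality $m'\le -\tfrac12 m^2+\tfrac18\|u_0\|_{H^1}^4$. You correctly note that naive bounds on the convolution terms involving $u_x^3$ fail (they would reintroduce $\|u_x\|_{L^\infty}$), and that the sign structure $A\ge 0\ge B$ must be used to force cancellations with the sharp constant $\tfrac18$. In \cite{YanLZ} this step is carried out by rewriting the one-sided exponential integrals of $u^3+\tfrac32 uu_x^2$ and $u_x^3$ via integration by parts against $e^{\pm x}$ (using $u-u_{xx}=y$ and the pointwise identities it generates), after which everything collapses to combinations of $A$, $B$, and $\int u^2 y\,e^{\pm x}$, all dominated by $\tfrac14\|u_0\|_{H^1}^2$. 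Since the present paper does not reproduce that computation, your proposal already contains strictly more detail than the paper; to make it a complete proof you would still need to supply that cancellation argument.
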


\section{Proof of Theorem \ref{thm1}}

In this section we prove Theorem \ref{thm1}.
We rely on the following Gronwall type estimates.

\begin{lem}\label{lem:gronwall}
Let $I=[0,T)$, $T>0$ could be infinity. Assume $A(t)\in C^1(I)$, $A(t)>0$ and there exists a constant $B>0$ such that
\EQn{\label{eq:At}
\dfrac{d}{dt}A(t)\leq BA(t)\ln (2+A(t)), \quad \forall\ t\in I.
}
Then we have
\EQn{
A(t)\leq (2+A(0))^{e^{Bt}}, \quad \forall\ t\in I.
}
\end{lem}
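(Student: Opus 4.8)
The plan is to reduce the differential inequality \eqref{eq:At} to a linear one by introducing the substitution $F(t) := \ln(2 + A(t))$, which is the natural quantity to track since the right-hand side of \eqref{eq:At} involves exactly the factor $A(t)\ln(2+A(t))$. Since $A \in C^1(I)$ and $A(t) > 0$, the function $F$ is $C^1$ on $I$ and strictly positive (indeed $F(t) > \ln 2 > 0$).

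Differentiating gives $F'(t) = \dfrac{A'(t)}{2 + A(t)}$. Applying the hypothesis \eqref{eq:At} we obtain
\begin{align*}
F'(t) = \frac{A'(t)}{2+A(t)} \leq \frac{B A(t) \ln(2+A(t))}{2+A(t)} \leq B \ln(2+A(t)) = B F(t),
\end{align*}
where in the last inequality we used $\dfrac{A(t)}{2+A(t)} < 1$. Thus $F$ satisfies the linear differential inequality $F'(t) \leq B F(t)$ on $I$.

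Now the standard Gronwall argument applies: consider $G(t) := e^{-Bt} F(t)$, which is $C^1$ on $I$ with $G'(t) = e^{-Bt}(F'(t) - B F(t)) \leq 0$, so $G$ is non-increasing and $G(t) \leq G(0) = F(0)$ for all $t \in I$. This yields $F(t) \leq e^{Bt} F(0)$, that is,
\begin{align*}
\ln(2+A(t)) \leq e^{Bt} \ln(2+A(0)) = \ln\big((2+A(0))^{e^{Bt}}\big).
\end{align*}
Exponentiating gives $2 + A(t) \leq (2+A(0))^{e^{Bt}}$, and since $A(t) \leq 2 + A(t)$ we conclude $A(t) \leq (2+A(0))^{e^{Bt}}$ for all $t \in I$, as desired. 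There is no real obstacle here; the only point requiring a little care is justifying the substitution (that $F$ is $C^1$ and that the elementary bound $A/(2+A) < 1$ lets us linearize), after which the conclusion is the textbook integrating-factor form of Gronwall's inequality.
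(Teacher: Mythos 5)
Your proof is correct and follows essentially the same route as the paper: the authors divide \eqref{eq:At} by $(2+A)\ln(2+A)$ and integrate the resulting bound $\frac{d}{ds}\ln\ln(2+A(s))\le B$, which is exactly your linear Gronwall inequality for $F=\ln(2+A)$ written in integrated form. The key step in both arguments is the elementary bound $A\le 2+A$, which you state explicitly.
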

\begin{proof}
By assumption we have
\EQ{
\frac{A'(s)}{[2+A(s)]\ln [2+A(s)]}\leq B, \quad \forall\ s\in I.
}
Integrating in $s$ over the interval $[0,t)$ for $t\in I$, we obtain the bound.
\end{proof}

\begin{lem}\label{leminfty}
Assume $u\in H^2(\E)$.  We have
\EQ{
\norm{u_x}_{L^\infty(\E)}\leq C \norm{u}_{B^1_{\infty,\infty}}\cdot\log_2(2+\norm{u}_{H^2}^2)+C.
}
\end{lem}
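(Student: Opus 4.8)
The plan is to prove this by a Littlewood--Paley decomposition, splitting the frequencies of $u_x$ at a threshold $N$ which will be optimized at the end, and playing off the $B^1_{\infty,\infty}$-norm (which controls each dyadic block uniformly, but whose blocks are not summable) against the $H^2$-norm (which furnishes summable decay at high frequencies). This is a Brezis--Gallouet/logarithmic-Sobolev type inequality, and it is exactly the ingredient that, together with the energy estimate \eqref{eq:energy estimates} and the Gronwall bound of Lemma~\ref{lem:gronwall}, will force the weak norm $\norm{u}_{B^1_{\infty,\infty}(\E)}$ to blow up at the maximal time.

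First I would write, for $u\in H^2(\E)$,
$u_x=\partial_x P_{\le N}u+\sum_{k>N}\partial_x P_ku$
in $L^\infty(\E)$, which is legitimate since $u$ is smooth and decaying. For the high-frequency tail I would combine $\norm{\partial_x P_ku}_{L^\infty(\E)}\lesssim 2^k\norm{P_ku}_{L^\infty(\E)}$ with the Bernstein inequality $\norm{P_kg}_{L^\infty(\E)}\lesssim 2^{k/2}\norm{P_kg}_{L^2(\E)}$ (which holds on both $\R$ and $\T$), to obtain $\norm{\partial_x P_ku}_{L^\infty(\E)}\lesssim 2^{3k/2}\norm{P_ku}_{L^2(\E)}=2^{-k/2}\,\big(2^{2k}\norm{P_ku}_{L^2(\E)}\big)$. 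Summing over $k>N$ and applying Cauchy--Schwarz in $k$ bounds this part by $C\,2^{-N/2}\norm{u}_{H^2(\E)}$.

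For the low-frequency part I would use $\norm{\partial_x P_ku}_{L^\infty(\E)}\lesssim 2^k\norm{P_ku}_{L^\infty(\E)}\le \norm{u}_{B^1_{\infty,\infty}(\E)}$ for each $k$, which is just the definition of the $B^1_{\infty,\infty}$-norm. The only point needing a little care is the lowest block $\partial_x P_{\le 0}u$: since the symbol $\xi\chi_{\le 0}(|\xi|)$ is smooth, compactly supported, and supported where $\chi_{\le 0}+\chi_1\equiv 1$, one has $\partial_xP_{\le 0}u=\partial_xP_{\le 0}(P_{\le 0}u+P_1u)$, and convolution with the (fixed, $L^1$-bounded) kernel of $\partial_xP_{\le 0}$ gives $\norm{\partial_xP_{\le 0}u}_{L^\infty(\E)}\lesssim\norm{P_{\le 0}u}_{L^\infty(\E)}+\norm{P_1u}_{L^\infty(\E)}\lesssim\norm{u}_{B^1_{\infty,\infty}(\E)}$. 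Adding the at most $N$ blocks $P_k$ with $1\le k\le N$, the low-frequency part is $\le C(N+1)\norm{u}_{B^1_{\infty,\infty}(\E)}$.

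Finally I would choose $N$ to be the integer part of $\log_2\big(2+\norm{u}_{H^2(\E)}^2\big)$, so that on the one hand $2^{-N/2}\norm{u}_{H^2(\E)}\le C$, and on the other hand $N+1\lesssim\log_2\big(2+\norm{u}_{H^2(\E)}^2\big)$. Combining the two parts yields $\norm{u_x}_{L^\infty(\E)}\le C\,\norm{u}_{B^1_{\infty,\infty}(\E)}\log_2\big(2+\norm{u}_{H^2(\E)}^2\big)+C$, as claimed. I do not anticipate any genuine obstacle: the argument is routine once the threshold is chosen, and the only mild subtleties are the handling of the bottom block $P_{\le 0}$ and making sure all implicit constants depend only on the fixed cutoffs $\eta,\chi_k$ and not on $u$.
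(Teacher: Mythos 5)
Your proof is correct and is essentially the paper's own argument: the same dyadic splitting at a threshold $N$, Bernstein for the high frequencies against $\norm{u}_{H^2}$, the trivial block-by-block bound against $\norm{u}_{B^1_{\infty,\infty}}$ for the low frequencies, and the same choice $N\sim\log_2(2+\norm{u}_{H^2}^2)$. Your extra care with the bottom block $P_{\le 0}$ (which the paper glosses over by summing $k\le N-1$) is a harmless and welcome refinement, not a different approach.
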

\begin{proof}
Fixing an integer $N>0$, we get
\EQ{
\norm{u_x}_{L^\infty}\leq& \sum_{k\leq N-1}\norm{P_ku_x}_{L^\infty}+\sum_{k\geq N}\norm{P_ku_x}_{L^\infty}\\
\leq& CN \norm{u}_{B^1_{\infty,\infty}}+C\sum_{k\geq N}2^{-k/2}2^{2k}\norm{P_ku}_{L^2}\\
\leq& CN \norm{u}_{B^1_{\infty,\infty}}+C2^{-N/2}\norm{u}_{H^2}.
}
Setting $N=\log_2(2+\norm{u}_{H^2}^2)$, we complete the proof.
\end{proof}

Now we are ready to prove Theorem \ref{thm1}.
Fix $1\leq p\leq \infty, 1<r\leq \infty$ and $\e>0$. We define $h(x)$
\EQ{
h(x)=\sum_{k\geq 1} \frac{1}{2^{2k}k^{\frac{2}{1+r}}}h_k(x)
}
with $h_k$ given by the Fourier transform $\widehat{h}_k(\xi)=i2^{-k}\xi\tilde{\chi}(2^{-k}\xi)$, $\xi\in \E^*$, where $\tilde{\chi}$ is an even non-negative, non-zero $C_0^\infty$ function such that $\tilde{\chi}\chi_0=\tilde{\chi}$.
Then clearly we see that $h$ is real-valued odd function and $(P_k h)(x)=\frac{1}{2^{2k}k^{\frac{2}{1+r}}}h_k(x)$. We also have $\norm{P_kh}_{L^p}\sim \frac{2^{k/p'}}{2^{2k}k^{\frac{2}{1+r}}}$ and   thus
\EQ{
\norm{h}_{B^{1+1/p}_{p,q}(\E)}\sim \normo{\frac{1}{k^{\frac{2}{1+r}}}}_{l_k^q}.
}
From this we see  that $h\in B^{1+1/p}_{p,r}(\E)\setminus B^{1+1/p}_{p,1}(\E)$, and
\[h'(0)=\int \widehat{h'}(\xi)d\xi=\int i2\pi\xi \widehat{h}(\xi)d\xi=-\infty.\]
For $ \e>0 $, we take $u_{0,\e}=\norm{h}_{B^{1+1/p}_{p,r}}^{-1}\cdot \e P_{\leq K}(h)$ with $K$ sufficiently large such that $u_{0,\e} '(0)<-2\e^{-10}$. Then $u_{0,\e}$ is a real valued odd function, $u_{0,\e}\in H^\infty(\E)$, $\norm{u_{0,\e}}_{B^{1+1/p}_{p,r}}\leq \e$. By Lemma \ref{lem1}
and Lemma \ref{lem3}
, there is a unique associated solution $u_\e \in C([0,T); H^\infty(\E))$ with a maximal lifespan $T_\e<\e^{10}$.
To prove Theorem \ref{thm1} it suffices to show
\EQn{\label{Claim2}\limsup_{t\to T_\e^-}\norm{u_\e (t)}_{B^{1}_{\infty,\infty}}=\infty.}

We prove \eqref{Claim2} by contradiction. If \eqref{Claim2} fails, then $\exists M_\e >1$ such that \[\sup_{t\in [0,T_\e)}\norm{u_\e (t)}_{B^{1}_{\infty,\infty}}\leq M_\e.\]
By the energy estimates \eqref{eq:energy estimates} and Lemma \ref{leminfty}, we get
\EQ{
\frac{d}{dt}\norm{u_\e}_{H^2}^2\leq& C\norm{u_{\e,x}}_{L^\infty} \norm{u_\e}_{H^2}^2\\
\leq& C( \norm{u_\e}_{B^1_{\infty,\infty}}\log_2(2+\norm{u_\e}_{H^2}^2)+1)\norm{u_\e}_{H^2}^2\\
\leq& CM_\e \norm{u_\e}_{H^2}^2\log_2(2+\norm{u_\e}_{H^2}^2).
}
Using Gronwall inequality in Lemma \ref{lem:gronwall} we get $\sup_{t\in [0,T_\e)}\norm{u_\e(t)}^2_{H^2}<\infty$ which contradicts to the blow-up criteria in Lemma \ref{lem1}.

\begin{rem}\label{remlast}
To show the norm inflation for the Novikov equation on the real-line, one  uses the energy estimate \eqref{eq: Nov energy} and the blow-up result in Lemma \ref{lem4} as  in the above proof.  The key point is to construct such an initial data.  Fixing $p\in [1,\infty]$ and $r\in (1,\infty]$, we define
\EQ{
u_0(x)=\sum_{k= 1}^K \frac{1}{k^{\frac{2}{1+r}}}\Lambda^{-2} (\phi_k)( x)
}
where $K$ is a large integer determined later and $\phi_k=2^k\phi(2^k x)$ with $\phi\in C_0^\infty (\R)$ given by $\phi(x)=\eta(x+2)-\eta(x-200)$.

Obviously $u_0$ is a real-valued $H^\infty$ function and we can verify the following properties.

1.  For $y_0(x)=\Lambda^2 u_0(x)=\sum_{k= 1}^K \frac{1}{k^{\frac{2}{1+r}}}\phi_k( x)$, it is easy to see that $y_0(x)\geq 0$ for $x\leq 0$; $y_0(x)\leq 0$ for $x\geq 0$ and $y_0(0)=0$.

2. $u_0\in B^{1+1/p}_{p, r}$ and $\norm{u_0}_{B^{1+1/p}_{p, r}}\leq C_r$ uniformly in $K$.  Indeed,
since
\[\widehat{\phi}(\xi)=\hat{\eta}(\xi)(e^{4\pi i\xi}-e^{-400 \pi i\xi}),\]
we have $\widehat{\phi}(0)=0$.  So $\widehat{\phi}(\xi)$ is a Schwartz function localized at $|\xi| \sim 1$.  Rigorously,
\EQ{
P_k [\Lambda^{-2}\phi_j](x)=\int (1+4\pi^2\xi^2)^{-1}\hat{\phi}(2^{-j}\xi)e^{ix\xi}\chi (2^{-k}\xi)d\xi.
}
Then we have $\norm{P_k [\Lambda^{-2}\phi_j]}_{L^p}\leq C 2^{-2k}2^{k/p'}2^{-|j-k|}$ and thus $\norm{u_0}_{B^{1+1/p}_{p, r}}\leq C_r$.

3. $u_0(0)>C$ uniformly in $K$ and $u_0'(0)\to -\infty$ as $K\to \infty$.  Indeed,
\EQ{
u_0(0)=&\sum_{k= 1}^K \frac{1}{2k^{\frac{2}{1+r}}}\int e^{-2^{-k}|x|}\phi( x)dx\\
=&\sum_{k= 1}^K \frac{1}{2 k^{\frac{2}{1+r}}}\int (e^{-2^{-k}|x-2|}-e^{-2^{-k}|x+200|})\eta( x)dx\\
\geq&\frac12\int (e^{-2^{-1}|x-2|}-e^{-2^{-1}|x+200|})\eta( x)dx\geq C.
}
Moreover,
\EQ{
u_0'(0)=&\int 2\pi i\xi \widehat{u_0}(\xi)d\xi=\sum_{k= 1}^K \frac{1}{k^{\frac{2}{1+r}}}\int 2\pi i\xi  (1+4\pi^2\xi^2)^{-1}\hat{\phi}(2^{-k}\xi)d\xi\\
=&\sum_{k= 1}^K \frac{1}{k^{\frac{2}{1+r}}}\int \frac{i\hat{\phi}(2^{-k}\xi)}{2\pi\xi} d\xi-\sum_{k= 1}^K \frac{1}{k^{\frac{2}{1+r}}}\int  \frac{i\hat{\phi}(2^{-k}\xi)}{2\pi\xi (1+4\pi^2\xi^2)}d\xi\\
:=&I+II.
}
For the term $II$, using the fact $|\hat{\phi}(2^{-k}\xi)|\leq C 2^{-k}|\xi|$ we get $|II|\leq C$.  On the other hand,
\EQ{
I=&\sum_{k= 1}^K \frac{1}{k^{\frac{2}{1+r}}}\int \frac{i(e^{4\pi i\xi}-e^{-400 \pi i\xi})\hat{\eta}(\xi)}{2\pi\xi} d\xi\\
=&\sum_{k= 1}^K \frac{-1}{k^{\frac{2}{1+r}}}\int \frac{\sin (4\pi \xi)+\sin(400 \pi \xi)}{2\pi\xi}\hat{\eta}(\xi) d\xi.
}
Using the fact $\widehat{1_{[-A,A]}(x)}(\xi)=\frac{\sin (2\pi A\xi)}{\pi\xi}$ and Parseval equality we get
\EQ{
I=&\sum_{k= 1}^K \frac{-1}{2k^{\frac{2}{1+r}}}\brk{\int_{-2}^2\eta(x) dx+\int_{-200}^{200}\eta(x) dx}\to -\infty, \quad K\to \infty.
}

With the above properties we see that taking intial data $ u_{0,\e}=C^{-1}\e u_0$, then $\norm{u_{0,\e}}_{B^{1+1/p}_{p,r}}\leq \e$ uniformly in $K$ and the maximal time of existence of the solution tends to $0$ (as $K\to \infty$) by Lemma \ref{lem4}. Using the energy estimate \eqref{eq: Nov energy} and blowup criteria we can prove Theorem \ref{thm1} for Novikov equation on the real-line.
\end{rem}

\,

\noindent\textbf{Acknowledgments} \ Z. Guo was  partially supported by ARC DP170101060.
X. Liu was partially supported by the Fundamental Research Funds for the Central
Universities (No.2018QNA34 and No.2017XKZD11).  Z. Yin was partially supported by NSFC (No.11671407 and No.11271382), FDCT (No.098/2013/A3), Guangdong Special Support Program (No.8-2015), and
the key project of NSF of Guangdong Province (No.2016A030311004).

\end{document}